\theoremstyle{plain}
 \newtheorem{thm}{Theorem}[section]
 \newtheorem{lem}{Lemma}[section]
\theoremstyle{definition}
 \newtheorem{rem}{Remark}[section]
\numberwithin{equation}{section}
\renewcommand{\le}{\leqslant}\renewcommand{\leq}{\leqslant}
\renewcommand{\ge}{\geqslant}\renewcommand{\geq}{\geqslant}
\renewcommand{\setminus}{\smallsetminus}
\newcommand{\eqrefs}[3]{\eqref{#1}-\eqref{#3}}
\newcommand{\rr}[1]{\mathbf R^{#1}}
\newcommand{\R}{\mathbf R}
\newcommand{\N}{\mathbf N}
\newcommand{\zz}[1]{\mathbf Z^{#1}}
\newcommand{\loc}{\operatorname{loc}}
\newcommand{\eabs}[1]{\langle #1\rangle}
\newcommand{\nm}[2]{\|#1\|_{#2}}
\title{Generalized free time-dependent Schrödinger equation with initial data in Fourier Lebesgue spaces}
\subjclass[2000]{Primary 42B15, 35B65, 35J10}
\keywords{Generalized time-dependent Schr{\"o}dinger equation, nontangential convergence, Fourier Lebesgue spaces}
\author[Johansson]{\bfseries Karoline Johansson}
\address{
School of Computer science, Physics and Mathematics \\ 
Linnaeus University  \\ 
Vejdes Plats 6,7 \\
S-351 95 V\"axj\"o\\
Sweden}
\email{karoline.johansson@lnu.se}
\begin{document}

\vspace{18mm}
\setcounter{page}{1}
\thispagestyle{empty}

\begin{abstract}
Consider the solution of the free time-dependent Schr{\"o}\-dinger
equation with initial data $f$. It is shown by Sj\"ogren and
Sj\"olin \cite{artikel} that there exists $f$ in the Sobolev space
$H^s(\rr d), \; s=d/2$ such that tangential convergence can not be
widened to convergence regions. In \cite{johansson1} we obtain the corresponding
results for a generalized version of the Schrödinger
equation, where $-\Delta_x$ is replaced by an operator
$\varphi(D)$, with special conditions on $\varphi$. In this paper
we show that similar results may be obtained for initial data in
Fourier Lebesgue spaces.
\end{abstract}

 \maketitle

\section{Introduction}
\par
In this paper we establish non-existence results of non-tangential
convergence for the solution $u=S^{\varphi}f$ to the generalized time-dependent
Schr{\"o}dinger equation
\begin{equation}\label{generaliserad}
(\varphi(D)+i\partial_t )u =0,
\end{equation}
with the initial condition
\begin{equation*}
u(x,0)=f(x).
\end{equation*}
Here $\varphi$ is real-valued, and its radial derivatives of first
and second orders ($\varphi' =\varphi'_r$ and $\varphi''
=\varphi''_{rr}$) are continuous outside a compact set containing
origin, and fulfill appropriate growth conditions. In particular
$\varphi(\xi)=|\xi|^a$ will satisfy these conditions, for $a> 1$.
The exact conditions of admissible functions are given later on
and we refer to \cite{johansson1} for further examples of
admissible functions $\varphi$. By non-tangential convergence we
mean convergence to initial data as time goes to zero
and the space variable depends on the time non-linearly, i.e. the
space variable is not fixed (as for convergence along vertical
lines), nor linearly dependent of time (as for convergence along
arbitrary straight lines). Furthermore, we consider initial datas
in the weighted Fourier Lebesgue space, $\mathscr F
\!L^p_s=\mathscr F \!L^p_{(\omega)}$, where $\omega(x,\xi)=\eabs
\xi^s=(1+|\xi|^2)^{s/2}$, as well as for mixed weighted Fourier
Lebesgue spaces, 
$$
\mathscr F\!L_{s_1,s_2}^{p,q}(\rr{d_1}\times
\rr{d_2})=\mathscr F\!L_{s_1,s_2}^{p,q}(\rr d)=\mathscr
F\!L_{(\omega)}^{p,q}(\rr d),
$$
where $\omega(x,\xi)=\omega(x,\xi_1,\xi_2)=\eabs{\xi_1}^{s_1}\eabs{\xi_2}^{s_2}$ and $\xi_1\in\rr{d_1}$, $\xi_2\in\rr{d_2}$.

\par

 For $p=2$ and initial data which belongs to $\mathscr F\!L_{d(p-1)/p}^{p}(\rr d)$ we recover Theorem $1.1$ in \cite{johansson1}. There we proved existence of a function $f$
in the Sobolev space $H^{d/2}=\mathscr F \!L^2_{d/2}$ such that
near the vertical line $t \mapsto (x,t)$ through an arbitrary
point $(x,0)$ there are points accumulating at $(x,0)$ such that
the solution of equation \eqref{generaliserad} takes values far
from $f$. This means that the solution of the time-dependent
Schr{\"o}dinger equation with initial condition $u(x,0)=f(x)$ does
not converge non-tangentially to $f$. Therefore we can not
consider regions of convergence.
\par

In this paper, we prove that the corresponding results hold for functions $f\in \mathscr
F\!L^p_{d(p-1)/p}(\rr d)$ for $p\in (1,\infty]$. In the proof we use some ideas by
Sj{\"o}gren and Sj{\"o}lin in \cite{artikel} as well as
\cite{johansson1}, to construct
a counter example. Some ideas can also be found in Sj{\"o}lin
\cite{{LP}, {Counter}} and Walther \cite{{Sharp},{Sharpmax}}, and
some related results are given in Bourgain \cite{Bourgain}, Kenig,
Ponce and Vega \cite{Kenig}, and Sj{\"o}lin \cite{{ref2},{Hom}}.
The result in \cite{johansson1} is a special case of the result
obtained here and the techniques here are similar.

\par

Existence of regions of convergence has been studied before for
other equations. For example, Stein and Weiss consider in
\cite[Chapter II Theorem $3.16$]{SteinWeiss} Poisson integrals acting on
Lebesgue spaces. These
operators are related to the operator $S^{\varphi}$.

\par

%
%
%
For an appropriate function
$\varphi$ on $\rr d$, let $S^{\varphi}$ be the operator acting on
functions $f$ defined by
\begin{equation}\label{allmSaf}
f \mapsto \mathscr{F}^{-1}(\exp(i t \varphi(\xi))\mathscr{F}f),
\end{equation}
where $\mathscr{F}f$  is the Fourier transform of $f$, which takes the form
\begin{equation*}
\widehat{f}(\xi)=\mathscr{F}f(\xi)\equiv\int_{\rr d}e^{-i x \cdot \xi}f(x)\, d x,
\end{equation*}
when $f\in L^1(\rr d)$. This means that, if $\widehat{f}$ is an
integrable function, then $S^{\varphi}$ in \eqref{allmSaf} takes
the form
\begin{equation*}
S^{\varphi} f(x,t) = \frac{1}{(2 \pi)^d} \int_{\rr d} {e^{i x\cdot
\xi}e^{it \varphi(\xi)}\widehat{f}(\xi)}\, d \xi,\quad x \in
\rr d,\quad t \in \R.
\end{equation*}

\par

If $\varphi(\xi)=|\xi|^2$ and $f$ belongs to the Schwartz class
$\mathscr{S}(\rr d)$, then $S^{\varphi} f$ is the solution to the
time-dependent Schr{\"o}dinger equation $(-\Delta_x +i\partial_t)u
=0$ with the initial condition $u(x,0) = f(x)$.

\par

For more general
appropriate $\varphi$, for which the equation
\eqref{generaliserad} is well-defined, the expression $S^{\varphi}
f$ is the solution to the generalized time-dependent
Schr{\"o}dinger equation \eqref{generaliserad} with the initial
condition $u(x,0) = f(x)$. Note here that $S^{\varphi}f$ is
well-defined for any real-valued measurable $\varphi$ and $f\in \mathscr{S}$. On the other hand, it might be difficult to interpret \eqref{generaliserad} if for example $\varphi\not\in L^1_{loc}$.

\par

In order to state the main result we need to specify the
conditions on $\varphi$ and give some definitions. The function $\varphi$ should satisfy the
conditions
\begin{gather}\label{funktionerna}
\liminf_{r\to \infty}(\inf_{|\omega|=1}|\varphi'(r,\omega)|)=\infty,
\end{gather}
and
\begin{gather}\label{deriverade funktioner}
\sup_{r\ge
R}\Big(\sup_{|\omega|=1}\frac{r^{\beta}|\varphi''(r,\omega)|}{|\varphi'(r,\omega)|^2}\Big)<C,
\end{gather}
for some $\beta>0$ and some constant $C$.
Here $\varphi'(r\omega)=\varphi'(r,\omega)$ denotes the derivative
of $\varphi(r,\omega)$ with respect to $r$, and similarly for higher orders of derivatives.

\par
 In particular, $\varphi(\xi)= |\xi|^a$ is an appropriate
function for $a>1$ and $S^{\varphi}f(x,t)$ is then the solution to
the generalized time-dependent Schr{\"o}dinger equation
$((-\Delta_x)^{a/2} +i\partial_t)u =0$. For $a=2$ this is the
solution to the time-dependent Schr{\"o}dinger equation
$(-\Delta_x +i\partial_t)u =0$ and this case is treated in
Sj{\"o}gren and Sj{\"o}lin \cite{artikel}. Some additional
examples of appropriate functions $\varphi$ can be found in
\cite{johansson1}.

\par

Let $p\in [1,\infty ]$ and $\omega \in \mathscr P(\rr
{d})$. The (weighted) Fourier Lebesgue space $\mathscr
FL^p_{(\omega )}(\rr d)$ is the inverse Fourier image of
$L^p _{(\omega )} (\rr d)$, i.{\,}e. $\mathscr FL^p_{(\omega )}(\rr d)$
consists of all $f\in \mathscr S'(\rr d)$ such that
\begin{equation}\label{FLnorm}
\nm f{\mathscr FL^{p}_{(\omega )}} \equiv \nm {\widehat f\cdot
\omega }{L^p},
\end{equation}
is finite. If $\omega =1$, then the notation $\mathscr FL^p$
is used instead of $\mathscr FL^p_{(\omega )}$. We note that if
$\omega (\xi )=\eabs \xi ^s$, then $\mathscr
FL^{p}_{(\omega )}$ is the Fourier image of the Bessel potential space
$H^p_s$ (cf. \cite{BL}).

\par

Here and in what follows we use the notation $\mathscr F L^p_s =\mathscr F L^p_{(\omega )}$  when $\omega (\xi )=\eabs \xi ^s$ so
\begin{equation}\label{Fourier-lebesgue}
\|f\|^p_{\mathscr F L^p_s (\rr d)}\equiv \int_{\rr
d}\eabs{\xi}^{sp} |\widehat{f} (\xi )|^p  \, d \xi < \infty.
\end{equation}
\begin{thm} \label{a>1} Assume that the function
$\gamma:\R_+\rightarrow\R_+$ is strictly
increasing and continuous such that $\gamma(0)=0$. Let $R>0$, and let $\varphi$
be real-valued functions on $\rr d$ such that $\varphi '(r,\omega)$ and $\varphi''(r,\omega)$ are continuous and satisfy \eqref{funktionerna} and \eqref{deriverade funktioner} when $r>R$. Also let $p\in (1,\infty)$. Then there
exists a function $f \in \mathscr F L^p_s (\rr d)$, where $s=d(p-1)/p$, such that $S^{\varphi} f$
is continuous in $\{(x,t); t>0\}$ and
\begin{equation}\label{Sa till infty}
\limsup_{(y,t)\rightarrow (x,0)} |S^{\varphi} f(y,t)|= +\infty
\end{equation}
for all $x \in \rr d$, where the limit superior is taken over
those $(y,t)$ for which $|y-x|<\gamma (t)$ and $t>0$.

\par

Furthermore, if $p=1$ then the corresponding result holds for any $s<0$ and if $p=\infty$ the result holds for $s=d$.
\end{thm}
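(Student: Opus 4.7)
The plan is to construct $f$ as a lacunary series $f=\sum_{k\ge 1} a_k g_k$ with $\widehat{g_k}(\xi)=\psi(\xi-r_k\omega_k)$, where $\psi$ is a fixed nonnegative smooth bump of compact support, $(r_k)$ is lacunary with $r_k\to\infty$ (say $r_{k+1}\ge 2r_k$), and $\omega_k$ is a direction on $S^{d-1}$ chosen per scale. Lacunarity makes the Fourier supports disjoint, so the norm decouples:
\[
\|f\|_{\mathscr F L^p_s}^p \;\asymp\; \sum_k |a_k|^p \eabs{r_k}^{sp}.
\]
Choosing $a_k = \eabs{r_k}^{-s}k^{-2/p}$ puts $f$ on the boundary of $\mathscr F L^p_s$ at the critical exponent $s=d(p-1)/p$.

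For the pointwise blow-up I apply stationary phase to
\[
S^\varphi g_k(y,t) \;=\;(2\pi)^{-d}\int_{\rr d} e^{i(y\cdot\xi + t\varphi(\xi))}\psi(\xi - r_k\omega_k)\,d\xi.
\]
Since $\varphi$ is radial, the $\xi$-gradient of the phase vanishes at $\xi=r_k\omega_k$ precisely when $y=y_k:=-t\varphi'(r_k)\omega_k$; condition \eqref{funktionerna} gives $|\varphi'(r_k)|\to\infty$, so for any prescribed $t_k\to 0$ one can choose $r_k$ with $|y_k|<\gamma(t_k)$, placing $y_k$ in the nontangential region based at $0$. The phase Hessian at $\xi=r_k\omega_k$ has radial eigenvalue $t\varphi''(r_k)$ and $(d-1)$ angular eigenvalues $t\varphi'(r_k)/r_k$, and \eqref{deriverade funktioner} controls the ratio $|\varphi''|/|\varphi'|^2$ so the stationary-phase formula yields a lower bound
\[
|S^\varphi g_k(y_k,t_k)| \;\gtrsim\; t_k^{-d/2}\, r_k^{(d-1)/2}\, |\varphi''(r_k)|^{-1/2}\, |\varphi'(r_k)|^{-(d-1)/2}.
\]
Disjointness of the Fourier supports ensures that $|\sum_{j\ne k} a_j S^\varphi g_j(y_k,t_k)|$ stays uniformly bounded, so if $a_k$ times the right-hand side above tends to $\infty$, then $|S^\varphi f(y_k,t_k)|\to\infty$ along the nontangential sequence $(y_k,t_k)\to (0,0)$.

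To upgrade blow-up at one point to blow-up at every $x$, I would use a Baire-category argument in the Banach space $\mathscr F L^p_s$. For each fixed $x$, the sublinear functional
\[
T_x f \;=\; \limsup_{(y,t)\to(x,0),\; |y-x|<\gamma(t)} |S^\varphi f(y,t)|
\]
is unbounded on $\mathscr F L^p_s$, proved by the construction above applied to spatial translates of the $g_k$ (translations act as phase modulations on $\widehat{g_k}$, preserving the norm). The uniform boundedness principle makes $\{T_x=+\infty\}$ a residual set. Intersecting over a countable dense $X\subset\rr d$ yields a residual, hence non-empty, set of $f$ with $T_x f=+\infty$ for every $x\in X$; the continuity of $S^\varphi f$ on $\{t>0\}$ combined with the rotational symmetry of $\varphi$ (allowing the critical sphere $|y-x|=t|\varphi'(r_k)|$ to be aimed at any base point) then extends the blow-up from the dense set $X$ to all of $\rr d$.

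The main obstacle is the three-way compatibility of (i) $|y_k|<\gamma(t_k)$ for the nontangential approach, (ii) $a_k \cdot (\text{stationary-phase gain})\to\infty$ for pointwise blow-up, and (iii) $\sum_k a_k^p\eabs{r_k}^{sp}<\infty$ for membership in $\mathscr F L^p_s$; the identity $s=d(p-1)/p$ is exactly the threshold at which all three can be simultaneously satisfied. The endpoints fall out of the same scheme: for $p=1$ with any $s<0$, the norm budget $\sum_k a_k \eabs{r_k}^s$ leaves ample slack and the construction works for every $s<0$; for $p=\infty$ with $s=d$ one takes $a_k\asymp \eabs{r_k}^{-d}$ so that $\|\widehat f\cdot \eabs{\cdot}^d\|_{L^\infty}<\infty$, and the stationary-phase amplification again dominates this amplitude along the chosen sequence $(y_k,t_k)$.
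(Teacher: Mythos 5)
There is a fatal gap at the heart of Step 2 of your plan: the stationary-phase lower bound is not valid, and in fact no amplification is possible with your building blocks. Since $\widehat{g_k}=\psi(\cdot-r_k\omega_k)$ is a \emph{fixed-size} nonnegative bump, you have the trivial bound $|S^\varphi g_k(y,t)|\le (2\pi)^{-d}\|\psi\|_{L^1}=C$ uniformly in $k$, $y$, $t$. The stationary-phase main term $t^{-d/2}|\varphi''(r_k)|^{-1/2}(r_k/|\varphi'(r_k)|)^{(d-1)/2}$ is an asymptotic valid only when the phase oscillates rapidly across the support of the amplitude; here the relevant parameter $t\to 0$, the phase is nearly linear on the unit-scale support, and the "main term" you write down exceeds the trivial $L^1$ cap, so it cannot be a lower bound. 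Consequently $|a_k S^\varphi g_k|\le C a_k\to 0$ for every normalization compatible with $f\in\mathscr F L^p_s$, and the series cannot blow up. To beat the $L^1$ cap you must make $\|\widehat{g_k}\|_{L^1}\to\infty$ while keeping $\|\widehat{g_k}\eabs{\cdot}^{s}\|_{L^p}$ summable, and arrange \emph{exact} constructive interference at the target point. This is precisely what the paper does: it takes $\widehat f_\varphi(\xi)=|\xi|^{-d}(\log|\xi|)^{-B}\sum_j\chi_j(\xi)e^{-i(x_j\cdot\xi+t_j\varphi(\xi))}$ supported on thick annuli $R_j<|\xi|<R_j'=R_j^N$, so that the $j$-th block contributes $A_j^\varphi(x_j,t_j)\asymp(\log R_j')^{1-B}\to\infty$ (the modulation cancels the phase exactly at $(x_j,t_j)$, so the whole $L^1$ mass adds up), while $\|f_\varphi\|_{\mathscr FL^p_s}^p\asymp\int r^{-1}(\log r)^{-Bp}\,dr<\infty$ for $B>1/p$; the off-diagonal blocks are killed by one integration by parts using \eqref{funktionerna}, \eqref{deriverade funktioner} and the separation conditions \eqref{Rj1<a<2}, \eqref{Rj1<a<2.3} — no stationary-phase asymptotics are needed, only non-stationary-phase decay.

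Your reduction to a single base point via Baire category also does not go through as stated. At the critical exponent $s=d(p-1)/p$ the evaluation functionals $f\mapsto S^\varphi f(y,t)$ are \emph{not} bounded on $\mathscr FL^p_s$ (H\"older gives $\|\eabs\cdot^{-s}\|_{L^{p'}}^{p'}=\int\eabs\xi^{-d}d\xi=\infty$), and $S^\varphi f$ need not even be defined as an absolutely convergent integral for general $f$ in the space, so Banach--Steinhaus does not apply directly. Moreover, blow-up over a countable dense set $X$ of base points does not transfer to all $x$: a point $y$ with $|y-x'|<\gamma(t)$ for nearby $x'\in X$ need only satisfy $|y-x|<\gamma(t)+|x-x'|$, which is outside the approach region of $x$. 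The paper sidesteps both problems by building the resonance points $(x_j,t_j)$ into the single explicit function: the $x_j$ run over lattices of mesh $\delta_k=\gamma(1/(k+1))/\sqrt d$ in growing balls, and Lemma \ref{dense} guarantees that \emph{every} $x$ admits a subsequence $(x_{n_j},t_{n_j})\to(x,0)$ with $|x_{n_j}-x|<\gamma(t_{n_j})$, so one function works for all base points simultaneously. If you want to salvage your scheme, you would have to (i) replace the fixed bumps by modulated amplitudes on expanding frequency regions with diverging $L^1$ mass, and (ii) either prove a quantitative, locally uniform blow-up that survives the passage from a dense set of base points to all of $\rr d$, or index the blocks by a $\gamma$-adapted grid as the paper does.
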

Here we recall that $\varphi'=\varphi'_r$ and $\varphi'=\varphi''_{rr}$ are the first and second orders radial derivatives of $\varphi$.
When $p\in (1,\infty)$ and $s>d(p-1)/p$, $p=1$ and $s\geq 0$, or $p=\infty$ and $s> d$, no counter example of the form in Theorem \ref{a>1} can be provided, since $S^{\varphi}f(y, t)$ converges to $f(x)$ as $(y,t)$ approaches $(x,0)$ non-tangentially when $f \in \mathscr F L^p_s (\rr d)$. In fact, H{\"o}lder's inequality gives
\begin{gather*}
(2 \pi)^d|S^{\varphi}f(x,t)|\le \int_{\rr d}|\widehat{f}(\xi)| \, d\xi\le \nm{\eabs \cdot^{-s}}{L^{p'}(\rr d)}\nm{\widehat f \eabs \cdot^s}{L^p(\rr d)}
\\[1 ex]
\leq \nm{\eabs \cdot^{-s}}{L^{p'}(\rr d)}\nm{f}{\mathscr{F}\! L^p_s(\rr d)},
\end{gather*}
which is finite when $f\in \mathscr F L_s^p(\rr d), \, s>d(p-1)/p$
for some $p\in(1,\infty)$. Here $p'$ is the conjugate exponent, i.e. $1/p+1/p'=1$. Therefore convergence along vertical
lines can be extended to convergence regions when $s>d(p-1)/p$ and
$f$ belongs to $\mathscr F L_s^p(\rr d)$. We note that the estimates still hold for $p=1$ and $p=\infty$, however it follows directly from the first inequality that $S^{\varphi}f(x,t)$ is finite for $f\in \mathscr F L_s^1(\rr d)$ and $s\geq 0$. For $p=\infty$ we have that $S^{\varphi}f(x,t)$ is finite for $f\in \mathscr F L_s^\infty(\rr d)$ and $s> d$.

\par

\section{Notation for the proofs}
\par

In order to prove Theorem \ref{a>1} we introduce some notations.
Let $B_r(x)$ be the open ball in $\rr d$ with center at $x$ and radius
$r$. Numbers denoted by $C, \,c$ or $C'$ may be different at each
occurrence. We let
\begin{equation*}
\delta _k =\delta_{k,d} \equiv \gamma (1/(k+1)) / \sqrt{d}, \qquad
k \in \N,
\end{equation*}
where $\gamma$ is the same as in Theorem \ref{a>1}. Since
$\gamma$ is strictly increasing it is clear that $(\delta
_k)_{k\in \N}$ is strictly decreasing.
We also let $(x_j)_{j=1}^{\infty}\subset \rr d$ be chosen such that
$x_1, x_2,\dots , x_{m_1}$ denotes all points in $B_{1}(0)\cap
\delta_{1} \zz d$, $x_{m_1+1},\dots , x_{m_2}$ denotes all points in $B_{2}(0)\cap
\delta_{2} \zz d$ and generally
\begin{equation*}
\{x_{m_k+1}, \dots , x_{m_{k+1}}\}=B_{k+1}(0)\cap
\delta_{k+1} \zz d,\qquad \text{ for } k\ge 1.
\end{equation*}
Furthermore we choose a strictly decreasing sequence $(t_j)_1^{\infty}$ such that
$
1>t_1>t_2>\cdots > 0
$
and
\begin{equation*}
\frac{1}{k+2}<t_j< \frac{1}{k+1},\qquad k\in \N,
\end{equation*}
for $m_k +1\le j \le m_{k+1}$.

In the proof of
Theorem \ref{a>1} we consider the function $f_{\varphi}$, which
is defined by the formula
\begin{equation}\label{f^}
\widehat{f}_{\varphi}(\xi )=\widehat{f}_{\varphi,B}(\xi )= | \xi | ^{-d} (\log | \xi | )^{-B}
\sum_{j=1}^{\infty} \chi _j(\xi)e^{- i( x_{j} \cdot \xi + t_{j}
{\varphi}(\xi))},
\end{equation}
where for $p\in (1,\infty]$ fixed, we may fixate $B$ such that
$1/p<B<1$. For $p=1$ any $B$, such that $0<B<1$, suffices. We also have that $\chi_j$ is the characteristic
function of
\begin{equation*}
 \Omega_j =\{ \xi \in \rr d;
R_j<|\xi|<R'_j\}.
\end{equation*}

\par
\vspace{0.1 cm}

Here $(R_j)_1^{\infty}$ and
$(R'_j)_1^{\infty}$ are sequences in $\R$ which fulfill the
following conditions:
\begin{enumerate}
\item $R_1 \ge 2+R$, $R'_1\ge R_1+1$, with $R$ given by Theorem \ref{a>1};
\\[1 ex]

\item $R'_j = R_j^N$ when $j\ge 2$, where $N$ is a large positive number and independent of $j$, which is specified
later on;\\[1 ex]
\item $R_j<R'_j <R_{j+1}$, when $j\ge 1$;
\\[1 ex]
\item \begin{equation}\label{vaxande}
|\varphi'(r,\omega)|>1 \qquad \text{when} \qquad r\ge R;
\end{equation}
\\[1 ex]
\item for $j\ge 2$
\begin{equation}\label{Rj1<a<2} R_j^{\min(\beta,1)} >\max_{l<j} \frac{
2^j}{t_l-t_j},
\end{equation}
where $\beta >0$ is the same constant as in \eqref{deriverade funktioner} and
\begin{equation}\label{Rj1<a<2.3} \inf_{R_j\le r\le R_j'}(\inf_{|\omega|=1}|\varphi'(r, \omega)|) >\max_{l<j} \frac{2|x_l-x_j|}{t_l-t_j}.
\end{equation}\\[1 ex]
\end{enumerate}
\begin{rem} The sequences $(R_j)_1^{\infty}$ and $(R_j')_1^{\infty}$ can be chosen since $\varphi$ satisfies condition \eqref{funktionerna}.
\end{rem}
Furthermore, in order to get convenient approximations of the
operator $S^{\varphi}$, we let
\begin{equation}\label{S^a_mf}
S^{\varphi}_mf(x,t) = \frac{1}{(2 \pi)^d} \int_{|\xi |< R'_m}
e^{i x\cdot \xi}e^{it \varphi(\xi)}\widehat{f}(\xi)\,  d \xi.
\end{equation}
Then
\begin{gather}\label{S^a_mf2}
S^{\varphi}_m f_{\varphi}(x,t) =  \sum_{j=1}^{m}A^{\varphi}_j(x, t),
\end{gather}
where
\begin{gather}\label{A^a_j}
A^{\varphi}_j(x, t) =\frac{1}{(2 \pi)^d} \int_{\Omega_j} e^{i
(x-x_j)\cdot \xi}e^{i(t-t_j) \varphi(\xi)} | \xi | ^{-d} (\log |
\xi | )^{-B}\,  d \xi.
\end{gather}
By using polar coordinates we get
\begin{gather}\label{variabelbyte}
A^{\varphi}_j(x_k,t_k) =\frac{1}{(2 \pi)^d} \int_{|\omega| =1}
\Big\{\int_{R_j}^{R'_j} \frac{1}{r(\log r)^{B}}
e^{iF_{\varphi}(r,\omega)}\,  d r\Big\}\,  d \sigma (\omega  ),
\end{gather}
where
\begin{equation*}
F_{\varphi}(r,\omega)= r(x_k - x_j) \cdot \omega + (t_k
-t_j)\varphi(r,\omega),
\end{equation*}
and $d \sigma (\omega)$ is the euclidean surface measure on the $d-1$-dimensional unit sphere.
By differentiation we get
\begin{equation}\label{Fa',a>2}
F'_{\varphi}(r,\omega)= (x_k - x_j) \cdot \omega + (t_k
-t_j)\varphi'(r,\omega)
\end{equation}
and
\begin{equation}\label{Fbiss}
F''_{\varphi}(r,\omega)= (t_k -t_j)\varphi''(r,\omega).
\end{equation}
Here recall that $F'_{\varphi}(r\omega)=F'_{\varphi}(r,\omega)$ and $F''_{\varphi}(r,\omega)$
denote the first and second orders of derivatives of
$F_{\varphi}(r,\omega)$ with respect to the $r$-variable.

\par
\vspace{0.1 cm}

By integration by parts in the inner integral of \eqref{variabelbyte} we get
\begin{gather}\label{a>=2, integral for n>1}
\int_{R_j}^{R'_j} \frac{1}{r (\log r )^{B}} e^{i F_{\varphi}(r,\omega)}\,
d r =
A_{\varphi}-B_{\varphi},
\end{gather}
where
\begin{gather}\label{Del 1 av partialint}
A_{\varphi}=
\Big[\frac{e^{i F_{\varphi}(r,\omega)}}{r (\log r )^{B}i F_{\varphi}'(r,\omega)}\Big
]^{R'_j}_{R_j}
\end{gather}
and
\begin{gather}\label{Del 2 av partialint}
B_{\varphi}=\int_{R_j}^{R'_j} \frac{d}{dr}\Big (\frac{1}{r
(\log r )^{B}i F_{\varphi}'(r,\omega)}\Big )e^{i F_{\varphi}(r,\omega)}\,  d r.
\end{gather}

\par

\section{Proofs}

\par

To prove Theorem \ref{a>1} we need some preparing lemmas. In the following lemma we prove that for fixed $ x\in
B_k(0)$ there exists sequences $(x_{n_j})_{1}^{\infty}$ and
$(t_{n_j})_{1}^{\infty}$ such that
\begin{equation*}
x_{n_j} \in \{x_{m_k+1}, \dots ,x_{m_{k+1}}\},\qquad \text{ and } \qquad t_{n_j} \in \{t_{m_k+1}, \dots ,t_{m_{k+1}}\}
\end{equation*}
 and $|x_{n_j}-x|<\gamma(t_{n_j})$. The lemma is left without
 proof since the result can be found in \cite{johansson1}.
\begin{lem}\label{dense}
Let $x\in \rr d$ be fixed. Then for each $k \ge |x|$ there exists
$x_{n_j} \in \{x_{m_k+1}, \dots ,x_{m_{k+1}}\}$ and $t_{n_j} \in \{t_{m_k+1}, \dots ,t_{m_{k+1}}\}$ such that
$|x_{n_j}-x|<\gamma(t_{n_j})$. In particular $(x_{n_j},t_{n_j})\to
(x,0)$ as $j$ turns to infinity.
\end{lem}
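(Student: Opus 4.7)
My plan is a direct geometric construction based on the lattice structure of $\{x_{m_k+1},\dots,x_{m_{k+1}}\}=B_{k+1}(0)\cap\delta_{k+1}\zz d$. Fix $x\in\rr d$ and an integer $k\ge|x|$ (we may assume $k$ is as large as convenient, since the conclusion concerns $k\to\infty$). Because the cube $[-\delta_{k+1}/2,\delta_{k+1}/2]^d$ is a fundamental domain for $\delta_{k+1}\zz d$, there is a lattice point $y$ with $|y-x|_{\infty}\le\delta_{k+1}/2$, whence
\[
|y-x|\le \sqrt d\,\delta_{k+1}/2=\gamma\bigl(1/(k+2)\bigr)/2.
\]
To confirm that $y$ lies in the relevant block, I would check $y\in B_{k+1}(0)$ by the triangle inequality: $|y|\le|x|+|y-x|\le k+\gamma(1/(k+2))/2<k+1$, where the final inequality holds for $k$ large enough since $\gamma$ is continuous with $\gamma(0)=0$. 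Thus $y=x_{n_j}$ for some index $n_j$ with $m_k+1\le n_j\le m_{k+1}$, and the associated time satisfies $1/(k+2)<t_{n_j}<1/(k+1)$.

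Next I would verify the non-tangential inequality $|x_{n_j}-x|<\gamma(t_{n_j})$. Since $\gamma$ is strictly increasing and $t_{n_j}>1/(k+2)$,
\[
\gamma(t_{n_j})>\gamma\bigl(1/(k+2)\bigr)>\gamma\bigl(1/(k+2)\bigr)/2\ge|x_{n_j}-x|,
\]
which is exactly the required estimate. For the limit $(x_{n_j},t_{n_j})\to(x,0)$ as $k\to\infty$, I would simply observe that $t_{n_j}<1/(k+1)\to 0$ and $|x_{n_j}-x|\le\gamma(1/(k+2))/2\to 0$ by continuity of $\gamma$ at the origin.

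The crucial design feature, and really all that is being used, is that the lattice spacing $\delta_{k+1}=\gamma(1/(k+2))/\sqrt d$ was chosen precisely so that the worst-case Euclidean distance $\sqrt d\,\delta_{k+1}/2$ to the nearest lattice site equals half of $\gamma(1/(k+2))$, giving strict inequality against $\gamma(t_{n_j})$. The only step worth flagging as a verification is that this nearest lattice point actually lies in $B_{k+1}(0)$; it can fail for small $k$ but holds for all $k$ sufficiently large, so since the conclusion is a statement about $k\to\infty$, this is not a genuine obstacle.
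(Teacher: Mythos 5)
The paper does not actually prove this lemma --- it states it without proof and refers to \cite{johansson1} --- so there is no internal argument to compare yours against. Your construction is the natural one and is essentially correct: the lattice $\delta_{k+1}\zz d$ was indeed calibrated so that the worst-case Euclidean rounding error $\sqrt d\,\delta_{k+1}/2=\gamma(1/(k+2))/2$ sits strictly below $\gamma(t_{n_j})$ for every admissible $t_{n_j}>1/(k+2)$, and the limit statement follows since $t_{n_j}<1/(k+1)\to 0$ and $\gamma$ is continuous at $0$. The one loose end is the one you flag yourself: rounding to the \emph{nearest} lattice point may leave $B_{k+1}(0)$ when $\gamma(1/(k+2))\ge 2$, so as written you only obtain the conclusion for $k$ large, whereas the lemma asserts it for every $k\ge|x|$. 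This is harmless for the application (only the tail matters for \eqref{Sa till infty}), but it is also easily repaired: round each coordinate \emph{toward the origin}, i.e.\ take $y_i=\delta_{k+1}\operatorname{sgn}(x_i)\lfloor |x_i|/\delta_{k+1}\rfloor$. Then $|y_i|\le|x_i|$ gives $|y|\le|x|\le k<k+1$, so $y\in B_{k+1}(0)$ with no largeness assumption, while $|y_i-x_i|<\delta_{k+1}$ still yields $|y-x|<\sqrt d\,\delta_{k+1}=\gamma(1/(k+2))<\gamma(t_{n_j})$. You lose the factor $1/2$ but need none of it, and the lemma then holds verbatim for all $k\ge|x|$.
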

\par

We want to prove that $f_{\varphi}$ in \eqref{f^} belongs to
$\mathscr F L_s^p(\rr d)$, with $s=d(p-1)/p$, and fulfill \eqref{Sa till infty}. The former
relation is a consequence of Lemma \ref{ghat} below, which
concerns Sobolev space properties for functions of the form
\begin{equation}\label{definitiong}
\widehat{g}(\xi )= | \xi | ^{-d} (\log | \xi | )^{-\rho/p}
\sum_{j=1}^{\infty} \chi _j(\xi)b_j(\xi),
\end{equation}
where $\chi_j$ is the characteristic function on disjoint sets $\Omega_j$.

\begin{lem} \label{ghat} Assume that $p\in(1,\infty)$, $\rho >1$, $\Omega_j$ for $j \in \N$ are disjoint open subsets of
$\rr d\backslash B_{\mu}(0)$ for some $\mu>2$, $b_j\in L^1_{loc}(\rr d)$ for $j \in
\N$ satisfies
\begin{equation*}
\sup_{j\in \N}\|b_j\|_{L^{\infty}(\Omega_j)}<\infty ,
\end{equation*}
and let $\chi_j$ be the characteristic function for $\Omega_j$. If
$g$ is given by \eqref{definitiong}, then $g\in \mathscr F
L_s^p(\rr d)$, where $s=d(p-1)/p$.

\par

For $p=1$ and $\rho \geq 0$ it follows that $g\in \mathscr F \! L_s^1(\rr d)$ for any $s<0$. Furthermore, if $p=\infty$ and $\rho/p$ in \eqref{definitiong} is replaced by $\rho\geq 0$, then $g\in \mathscr F \! L_s^\infty(\rr d)$ for $s=d$.
\end{lem}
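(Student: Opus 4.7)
The plan is to reduce the $\mathscr F L^p_s$ norm of $g$ to a one-dimensional integral in the radial variable and then read off the convergence condition on $\rho$. Since the sets $\Omega_j$ are disjoint, for any $\xi$ at most one $\chi_j(\xi)$ is nonzero; consequently
\begin{equation*}
\Big|\sum_{j=1}^{\infty}\chi_j(\xi)b_j(\xi)\Big|^p=\sum_{j=1}^{\infty}\chi_j(\xi)|b_j(\xi)|^p\le M\,\chi_{\cup_j\Omega_j}(\xi),
\end{equation*}
where $M=\sup_j\|b_j\|_{L^{\infty}(\Omega_j)}^p<\infty$ by assumption. Inserting the definition of $\widehat g$ into \eqref{Fourier-lebesgue} with $s=d(p-1)/p$, using that $\cup_j\Omega_j\subset \rr d\setminus B_\mu(0)$ with $\mu>2$ (so $\eabs\xi\le C|\xi|$ and $\log|\xi|\ge\log 2>0$ there), we obtain
\begin{equation*}
\|g\|_{\mathscr F L^p_s}^p\le CM\int_{|\xi|>\mu}|\xi|^{sp-dp}(\log|\xi|)^{-\rho}\,d\xi .
\end{equation*}

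Passing to polar coordinates and using $sp-dp+d-1=d(p-1)-dp+d-1=-1$, the integral collapses to
\begin{equation*}
CM|S^{d-1}|\int_{\mu}^{\infty}r^{-1}(\log r)^{-\rho}\,dr ,
\end{equation*}
which after the substitution $u=\log r$ becomes $\int_{\log\mu}^{\infty}u^{-\rho}\,du$. This is finite precisely because $\rho>1$, which proves $g\in\mathscr F L^p_s(\rr d)$ in the main case.

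The endpoint $p=1$ is handled by the same scheme: one starts from $\|g\|_{\mathscr F L^1_s}=\int\eabs\xi^s|\widehat g(\xi)|\,d\xi$ and arrives, after polar coordinates, at $\int_\mu^\infty r^{s-1}(\log r)^{-\rho}\,dr$; for any $s<0$ the algebraic factor forces convergence irrespective of $\rho\ge 0$. The endpoint $p=\infty$ with $s=d$ is even simpler: after substituting the modified $\widehat g$ (with $(\log|\xi|)^{-\rho}$ in place of $(\log|\xi|)^{-\rho/p}$),
\begin{equation*}
\eabs\xi^{d}|\widehat g(\xi)|\le C\sup_j\|b_j\|_{L^\infty(\Omega_j)}(\log|\xi|)^{-\rho}\qquad\text{for }|\xi|>\mu,
\end{equation*}
which is uniformly bounded since $\log|\xi|>\log 2>0$ and $\rho\ge 0$; this gives $g\in\mathscr F L^\infty_d(\rr d)$.

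The only thing that needs care is the interchange of the $p$-th power with the sum, but the disjointness of the $\Omega_j$ makes this an identity, not an inequality, so there is no real obstacle beyond bookkeeping. The essential content of the lemma is the critical identification $sp-dp+d=0$, which turns the radial integral into a logarithmic one and thereby singles out $\rho>1$ as the exact condition for membership in $\mathscr F L^p_s$ at the critical Sobolev exponent.
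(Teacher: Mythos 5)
Your proof is correct and follows essentially the same route as the paper: bound the sum by $\sup_j\|b_j\|_{L^\infty(\Omega_j)}$ using disjointness, dominate $\eabs\xi^{sp}$ by $2^{sp/2}|\xi|^{sp}$ for $|\xi|>\mu>2$, and pass to polar coordinates where the choice $s=d(p-1)/p$ makes the radial integrand $r^{-1}(\log r)^{-\rho}$, convergent exactly for $\rho>1$; the endpoint cases $p=1$ and $p=\infty$ are handled identically. No gaps.
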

\begin{proof} By estimating \eqref{Fourier-lebesgue} for the function $g$ when $p\in(1,\infty)$, $s=d(p-1)/p$ and the assumptions given by the lemma is satisfied, we get that
\begin{multline*}
\int_{\rr d}|\widehat{g}(\xi )|^p\eabs{\xi}^{sp}\,  d\xi \cr \le C
\int_{\rr d \setminus B_{\mu}(0)}| \xi | ^{-dp} (\log | \xi |
)^{-\rho} \eabs{\xi}^{sp}\,  d\xi  \cr \le 2^{sp/2}C
\int_{\mu}^{\infty}\frac{1}{r(\log r  )^{\rho}} \,  dr < \infty .
\end{multline*}
The second inequality holds since
$(1+r^2)^{sp/2}<(r^2+r^2)^{sp/2}= 2^{sp/2} r^{sp}$ for $r>1$.

\par

For $p=1$ and $s<0$, the second inequality, in the estimates above, is replaced by 
\begin{equation*}
\int_{\rr d \setminus B_{\mu}(0)}| \xi | ^{-d} (\log | \xi |)^{-\rho} \eabs{\xi}^{s}\,  d\xi\\[1 ex] \le 2^{s/2}C
\int_{\mu}^{\infty}\frac{1}{r^{1-s}(\log r  )^{\rho}} \,  dr< \infty
\end{equation*}

\par

For $p=\infty$ we have the norm 
\begin{multline*}
\operatorname{ess \ sup }(|\widehat{g}(\xi )|^p\eabs{\xi}^{sp})\le C
\operatorname{ess \ sup }_{|\xi|>\mu}(| \xi | ^{-d} (\log | \xi |
)^{-\rho} \eabs{\xi}^{d}) \\[1 ex] \le 2^{d/2} (\log |\mu |
)^{-\rho} < \infty .
\end{multline*}

\par

\end{proof}
In the following lemma we give estimates of the expression $A_j^{\varphi}$.
\begin{lem}\label{uppskattning} Let $A_j^{\varphi}(x,t)$ be given by \eqref{A^a_j}.
Then the following is true:
\begin{alignat*}{2}
(1)& \qquad \sum_{j=1}^{k-1} |A^{\varphi}_j(x, t)| \le C (\log
R'_{k-1})^{1-B}, \textit{ with } C \textit{ independent of } k;
\\[1 ex] (2)& \qquad A^{\varphi}_k(x_k,t_k)>c(\log
R'_k)^{1-B}, \textit{ with } c>0 \textit{ independent of } k.
\end{alignat*}
\end{lem}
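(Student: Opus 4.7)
The plan is to handle the two parts separately, in reverse order, since the positive-phase computation in (2) also supplies the pointwise modulus bound needed in (1).

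For (2), I would exploit that at the distinguished evaluation point $(x,t)=(x_k,t_k)$ the phase in \eqref{variabelbyte} vanishes identically on $[R_k,R_k']\times\{|\omega|=1\}$: both $x_k-x_k$ and $t_k-t_k$ are zero, so $F_\varphi(r,\omega)\equiv 0$ and $e^{iF_\varphi}\equiv 1$. The angular integral then collapses to a constant multiple of the surface measure of the unit sphere, and the radial integral $\int_{R_k}^{R_k'} r^{-1}(\log r)^{-B}\,dr$ has the explicit antiderivative $(1-B)^{-1}(\log r)^{1-B}$. Substituting condition (2), $\log R_k'=N\log R_k$, factors the value as $c(\log R_k')^{1-B}$ with $c=c(d,N,B)>0$ independent of $k$, which is the claim.

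For (1), I would apply the same pointwise identity $|e^{iF_\varphi(r,\omega)}|=1$ uniformly in $(x,t)$. The exact calculation from part (2) then gives the crude bound $|A_j^\varphi(x,t)|\le C'(\log R_j')^{1-B}$ for every $(x,t)$ and every $j$, with $C'$ independent of both. The summation step is purely arithmetic: conditions (2) and (3) force $\log R_{j+1}>\log R_j'=N\log R_j$, which iterates to $\log R_j'\le N^{-(k-1-j)}\log R_{k-1}'$. Raising to the power $1-B$ and summing the geometric series with ratio $N^{-(1-B)}<1$, which converges precisely because $B<1$, yields $\sum_{j=1}^{k-1}|A_j^\varphi(x,t)|\le C(\log R_{k-1}')^{1-B}$ with $C$ independent of $k$.

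The point worth flagging is what is \emph{not} needed here: neither the integration-by-parts identity \eqref{a>=2, integral for n>1}--\eqref{Del 2 av partialint} nor the quantitative conditions \eqref{Rj1<a<2}, \eqref{Rj1<a<2.3} play any role, because the rapid growth $R_j'=R_j^N$ already makes the sum in (1) dominated by its last term after the trivial modulus estimate. I expect those more delicate bounds to be invoked later to control the infinite tail $\sum_{j\ge k+1}A_j^\varphi(x_k,t_k)$ of $S^\varphi f_\varphi(x_k,t_k)$, where the terms $(\log R_j')^{1-B}$ are now increasing in $j$ and genuine oscillatory cancellation, rather than mere geometric decay of the denominators, will be required to close the estimate. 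The only subtlety in the present lemma is thus checking that the geometric-series ratio $N^{-(1-B)}$ is really less than one, which pins down the constraint $B<1$ used to fix $B$ in \eqref{f^}.
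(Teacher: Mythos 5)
Your proof is correct, and part (2) coincides with the paper's argument: at $(x_k,t_k)$ the phase $F_\varphi$ vanishes, the radial integral is computed exactly via the antiderivative $(1-B)^{-1}(\log r)^{1-B}$, and $\log R_k'=N\log R_k$ gives the factor $1-N^{-(1-B)}>0$. For part (1), however, you take a genuinely different route. The paper does not estimate the terms $|A_j^\varphi|$ individually and sum a series; it observes that the $\Omega_j$ with $j\le k-1$ are pairwise disjoint and all contained in $\{2\le|\xi|\le R'_{k-1}\}$, so the triangle inequality bounds the whole sum by the single integral $\int_{2\le|\xi|\le R'_{k-1}}|\xi|^{-d}(\log|\xi|)^{-B}\,d\xi\le C(\log R'_{k-1})^{1-B}$. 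That argument uses only disjointness and the monotonicity of $(\log r)^{1-B}$, and in particular makes no use of the lacunarity $R_j'=R_j^N$. Your version instead bounds each term by $C(\log R_j')^{1-B}$ and exploits the chain $\log R_j'<\log R_{j+1}=N^{-1}\log R'_{j+1}$ to sum a geometric series with ratio $N^{-(1-B)}<1$; this is valid (note only that $R_j'=R_j^N$ is imposed for $j\ge 2$, so the first link of the chain must go through $R_1'<R_2$, which is exactly what your iteration does) and yields the mildly sharper statement that the sum is dominated by its last term up to the factor $(1-N^{-(1-B)})^{-1}$, at the cost of a constant depending on $N$ (harmless, as $N$ is fixed). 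Your closing remarks are also accurate: the integration by parts \eqref{a>=2, integral for n>1}--\eqref{Del 2 av partialint} and the conditions \eqref{Rj1<a<2}, \eqref{Rj1<a<2.3} are reserved for the tail $j>k$ in Step 1 of the proof of the theorem, where absolute-value bounds would diverge and genuine oscillation is needed.
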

\begin{proof}
$(1)$ By triangle inequality and the fact that $|\xi |> 2$, when
$\xi\in \Omega_j$, we get
\begin{multline*}
 \sum_{j=1}^{k-1} |A^{\varphi}_j(x, t)| \le
\frac{1}{(2 \pi)^d}\int_{2 \le |\xi | \le R'_{k-1}} | \xi |^{-d}
(\log | \xi |)^{-B}\,  d \xi\cr = C
\int_2^{R'_{k-1}}\frac{1}{r(\log r)^{B}}\,  d r \le C (\log
R'_{k-1})^{1-B},
\end{multline*}
where $C$ is independent of $k$. In the last equality we have
taken polar coordinates as new variables of integration.

\par

 \flushleft{$(2)$} Since $R^N_j=R'_j$ for sufficiently large $N$,
we get
\begin{multline*}
A^{\varphi}_k(x_k,t_k)= C \int_{R_k}^{R'_k} \frac{1}{r(\log
r)^{B}}\,  d r \cr = C \Big ((\log R'_k)^{1-B}-(\log
(R'_k)^{1/N})^{1-B}\Big ) \cr = C\Big(1-\frac{1}{N^{1-B}}\Big)
(\log R'_k)^{1-B}
>c(\log R'_k)^{1-B},
\end{multline*}
for some constant $c>0$, which is independent of $k$.
\end{proof}

\begin{lem}\label{kontinuitet} Assume that $S^{\varphi}_m f_{\varphi}$ is given by \eqref{S^a_mf}. Then $S^{\varphi}_m f_{\varphi}$ is continuous on
$\{(x,t);t>0, x\in \rr d\}$.
\end{lem}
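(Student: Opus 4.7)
The plan is to exploit the representation \eqref{S^a_mf2}, which expresses $S^{\varphi}_m f_{\varphi}$ as a \emph{finite} sum $\sum_{j=1}^{m} A^{\varphi}_j(x,t)$. Since a finite sum of continuous functions is continuous, it suffices to prove that each $A^{\varphi}_j$ is jointly continuous in $(x,t) \in \rr d \times \R$. In fact, continuity will hold on all of $\rr d \times \R$, not merely on the half-space $\{t>0\}$; the restriction to $t>0$ is just what is needed later.

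To establish continuity of each $A^{\varphi}_j$ given by \eqref{A^a_j}, I would apply the dominated convergence theorem to the integral over $\Omega_j$. For every fixed $\xi \in \Omega_j$, the integrand
\begin{equation*}
h_j(x, t, \xi) = e^{i(x-x_j)\cdot \xi}\, e^{i(t-t_j)\varphi(\xi)}\, |\xi|^{-d} (\log |\xi|)^{-B}
\end{equation*}
is continuous in the variables $(x,t)$. Here I use that $\Omega_j \subset \{|\xi|>R\}$, and on that region $\varphi'(r,\omega)$ is continuous by hypothesis, so $\varphi$ itself is continuous (and in particular finite and real-valued) on $\Omega_j$.

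For the majorant, note that $|h_j(x,t,\xi)| \leq |\xi|^{-d}(\log|\xi|)^{-B}$ uniformly in $(x,t)$. On the bounded annulus $\Omega_j = \{R_j < |\xi| < R'_j\}$ this dominating function is bounded (since $R_j \ge R_1 \ge 2+R > 2$ keeps $\log|\xi|$ bounded away from zero) and supported in a set of finite Lebesgue measure, hence integrable. Dominated convergence then yields continuity of $A^{\varphi}_j$ at every $(x_0,t_0)$, and summing over $j = 1, \ldots, m$ completes the argument.

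The argument is essentially routine; the frequency truncation $|\xi|<R'_m$ built into the definition \eqref{S^a_mf} of $S^{\varphi}_m$ confines integration to a bounded region where the phase and amplitude are uniformly controlled, so there is no real obstacle. The only mild point to verify is that $\varphi$ itself is continuous on $\Omega_j$, which follows at once from the continuity assumption on $\varphi'$ in the statement of Theorem~\ref{a>1}.
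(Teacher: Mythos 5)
Your proof is correct and follows essentially the same route as the paper: dominated convergence over the bounded frequency region $\{|\xi|<R'_m\}$, using that the integrand is continuous in $(x,t)$ for each fixed $\xi$ and admits an integrable majorant (the paper uses $|\widehat{f}_{\varphi}(\xi)|$ directly; your splitting into the finitely many $A^{\varphi}_j$ changes nothing essential). Your side remark about the continuity of $\varphi$ on $\Omega_j$ is not actually needed: for each fixed $\xi$ the map $(x,t)\mapsto e^{ix\cdot\xi}e^{it\varphi(\xi)}$ is continuous as soon as $\varphi(\xi)$ is a finite real number, which is guaranteed since $\varphi$ is real-valued.
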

\begin{proof}
 The continuity for each $S^{\varphi}_m f_{\varphi}$ follows from the facts, that for almost every $\xi\in \rr d$, the map
 \begin{equation*}
 (x,t)\mapsto e^{i x\cdot \xi}e^{i t {\varphi}(\xi)}
\widehat{f}_{\varphi} ( \xi )
\end{equation*}
is continuous, and that
 \begin{equation*}\int_{|\xi|<R'_m}
|e^{i x\cdot \xi}e^{i t {\varphi}(\xi)} \widehat{f}_{\varphi} ( \xi )| \, d\xi =
\int_{|\xi|<R'_m} |\widehat{f}_{\varphi} ( \xi )| \, d\xi <C.
\end{equation*}
\end{proof}

When proving Theorem \ref{a>1}, we first prove that the modulus
of $S^{\varphi}_mf_{\varphi}(x_k, t_k)$ turns to infinity as $k$
goes to infinity. For this reason we note that the
triangle inequality and \eqref{S^a_mf2} implies that
\begin{multline}\label{summa}
|S^{\varphi}_mf_{\varphi}(x_k, t_k)|\ge |A^{\varphi}_k(x_k,t_k)|-\Big|\sum_{j
=1}^{k-1}A^{\varphi}_j(x_k,t_k)\Big| -\Big|\sum_{j=k+1}^m
A^{\varphi}_j(x_k,t_k) \Big|,
\end{multline}
where $m>k$.
We want to estimate the terms in \eqref{summa}. From Lemma
\ref{uppskattning} we get estimates for the first two terms. It
remains to estimate the last term.

\par

\begin{proof}[Proof of Theorem \ref{a>1}.]$\ $

\vspace{-0.15 cm}

{\flushleft{\textbf{Step $\mathbf{1}$.}}}  For $j>k \ge
2$ we shall estimate $|A_j^{\varphi} (x_k,t_k)|$ in \eqref{variabelbyte}.
We have to find appropriate estimates for $A_{\varphi}$ and
$B_{\varphi}$ in \eqrefs{a>=2, integral for n>1}{Del 1 av partialint}{Del 2 av partialint}. By using $t_k - t_j
> 0$ and $ \ R_j< r <R'_j$ it follows from \eqref{Rj1<a<2.3}, \eqref{Fa',a>2}, triangle inequality and Cauchy-Schwarz inequality that
\begin{multline}\label{absfprima}
|F'_{\varphi}(r,\omega)| \ge  (t_k -t_j)
|\varphi'(r,\omega)|-|x_k - x_j|\cr
>(t_k -t_j)|\varphi'(r,\omega)|-(t_k -t_j)\frac{|\varphi'(r,\omega)|}{2}\cr= \frac{|\varphi'(r,\omega)|}{2}(t_k -t_j).
\end{multline}
From \eqref{vaxande}, \eqref{Rj1<a<2} and
\eqref{absfprima} it follows that
\begin{multline*}
|A_{\varphi}| = \Big |  \Big [\frac{1}{r (\log r )^{B}i
F'_{\varphi}(r,\omega)}e^{i F_{\varphi}(r,\omega)}\Big
]^{R'_j}_{R_j}\Big | \cr  \le \frac{C}{R_j}\Big(\frac{1}{|F'_{\varphi}(R_j,\omega)|}+ \frac{1}{|F'_{\varphi}(R'_j,\omega)
|}\Big)\le \frac{C}{(t_k-t_j)R_j} \le C 2^{-j}.
\end{multline*}

\par

In order to estimate $B_{\varphi}$, using \eqref{deriverade funktioner}, \eqref{Fbiss} and \eqref{absfprima}, we have
\begin{multline*}
 \Big | \frac{d}{dr}\Big (\frac{1}{r (\log r )^{B}i
F'_{\varphi}(r,\omega)}\Big )e^{i F_{\varphi}(r,\omega)}\Big |\cr
\le \frac{C}{r^2|F'_{\varphi}(r,\omega)|}  +
\frac{C|F''_{\varphi}(r,\omega)|}{r|F'_{\varphi}(r,\omega)|^2(\log
r )^{B}} < \frac{C}{r^{1+\min(1,\beta)}(t_k-t_j)}.
\end{multline*}
This together with \eqref{Rj1<a<2} gives us
\begin{multline*}
|B_{\varphi}| = \Big |\int_{R_j}^{R'_j} \frac{d}{dr}\Big
(\frac{1}{r (\log r )^{B}i F'_{\varphi}(r,\omega)}\Big )e^{i
F_{\varphi}(r,\omega)}\,  d r \Big | \cr \le
\int_{R_j}^{R'_j}\frac{C}{r^{1+\min(1,\beta)}(t_k-t_j)}\,  d r \le
\frac{C}{R_j^{\min(1,\beta)}(t_k-t_j)} \le C 2^{-j}.
\end{multline*}
From the estimates above and the triangle inequality we get
\begin{gather}\label{Ajxk}
|A^{\varphi}_j(x_k, t_k)|\le
C(|A_{\varphi}|+|B_{\varphi}|)< C2^{-j}, \qquad j> k \ge 2.
\end{gather}
 Here $C$ is independent of $j$ and $k$.

\par

 Using the results from
\eqref{summa}, \eqref{Ajxk}, in combination with Lemma
\ref{uppskattning}, and recalling that $R_j'=R_j^N$, gives us
\begin{multline}\label{slutresultat}
|S^{\varphi}_m f_{\varphi}(x_k, t_k)| \ge c (\log
R'_k)^{1-B}- C'(\log R_k)^{1-B}- C\sum _{k+1}^m 2^{-j}\cr \ge c(\log(R_k'))^{1-B}-\frac{C'}{N^{1-B}}(\log(R_k'))^{1-B}-C \ge c
(\log R'_k)^{1-B},
\end{multline}
 when $m>k$ and $N$ is chosen sufficiently large. Here $c>0$ is independent of $k$.

\par
{\flushleft{\textbf{Step $\mathbf{2}$.} }} Now it remains to show
that $S^{\varphi} f_{\varphi}$ is continuous when $t>0$, and then
it suffices to prove this continuity on a compact subset $L$ of
$$\{(x,t);\, t>0,\, x\in \rr d\}.$$ We want to replace $(x_l, t_l)$
with $(x,t)\in L $ in \eqref{Rj1<a<2} and \eqref{Rj1<a<2.3}. Since we have maximum over all $l$ less than $j$, we can choose
$j_0<\infty$ large enough such that for all $j>l>j_0$ we have that $t_j<t_l<t$. Hence we may replace $(x_l, t_l)$
with $(x,t)\in L $ on the right-hand sides in \eqref{Rj1<a<2} and \eqref{Rj1<a<2.3} for all $j>j_0$.
This in turn implies that \eqref{Ajxk} holds when $(x_k, t_k)$ is replaced by $(x,
t) \in L$ and $j>j_0$. We use \eqref{Ajxk} to conclude that
\begin{multline*}
|S^{\varphi}_m f_{\varphi}(x,t) -S^{\varphi} f_{\varphi}(x,t)|\cr = \Big
|(2 \pi)^{-d} \int_{|\xi |< R'_m} e^{i x\cdot \xi}e^{it
\varphi(\xi)}\widehat{f}_{\varphi}(\xi)\,  d \xi - (2
\pi)^{-d} \int_{\rr d} {e^{i x\cdot \xi}e^{it
\varphi(\xi)}\widehat{f}_{\varphi}(\xi)}\,  d \xi \Big |\cr
=(2 \pi)^{-d} \Big | \int_{|\xi |> R'_m} e^{i x\cdot
\xi}e^{it \varphi(\xi)}\widehat{f}_{\varphi}(\xi)\,  d \xi \Big |
\le C\sum_{i=m+1}^{\infty}  2^{-i} = C2^{-m},
\end{multline*}
when $m> j_0$.
Hence $S^{\varphi}_m f_{\varphi}$ converge uniformly to $S^{\varphi} f_{\varphi}$ on every compact set.

\par
We have now showed that
$S^{\varphi}_m f_{\varphi}$ converge uniformly to $S^{\varphi}
f_{\varphi}$ on every compact set and from Lemma \ref{kontinuitet}
it follows that each $S^{\varphi}_m f_{\varphi}$ is a continuous
function. Therefore it follows that $S^{\varphi} f_{\varphi}$ is
continuous on $\{(x,t);t>0\}$. In particular there is an $N \in
\N$ such that
\begin{equation*}|S^{\varphi}_m
f_{\varphi}(x_k, t_k)-S^{\varphi}
f_{\varphi}(x_k,t_k)| < 1,
\end{equation*}
when $m>N$.
Using \eqref{slutresultat}
and the triangle inequality we get
\begin{multline*} c(\log R'_k)^{1-B} \le
|S^{\varphi}_m f_{\varphi}(x_k, t_k) | \cr \le
|S^{\varphi}_m f_{\varphi}(x_k, t_k)-S^{\varphi}
f_{\varphi}(x_k,t_k)| + |S^{\varphi} f_{\varphi}
(x_k,t_k)| <\cr 1+ |S^{\varphi}
f_{\varphi}(x_k,t_k)|.
\end{multline*}
This gives us
\begin{equation*}
|S^{\varphi} f_{\varphi}(x_k,t_k)| > c(\log R'_k)^{1-B}-1
\rightarrow +\infty \text{ \ as \ } k \rightarrow +\infty.
\end{equation*}
For any fixed $x\in \rr d$ we can by Lemma \ref{dense} choose a subsequence $(x_{n_j},t_{n_j})$ of $(x_k,t_k)$ that goes to $(x,0)$ as $j$ turns to infinity. This gives the result.
\end{proof}

\section{Mixed Fourier Lebesgue spaces}
In this section we consider weighted mixed Fourier Lebesgue spaces
as initial datas for the generalized free time-dependent
Schrödinger equation. An analogous version of the previous results
holds. Due to similarities in the proofs, as well as for the
definition of initial data, we only need to show that the initial
data belongs to the mixed Fourier Lebesgue space.

\par

Let $p, q\in [1,\infty ]$ and $d_1+d_2=d$, where $d_1,d_2\in
\mathbf N$. From now on we consider weights
$\omega(\xi)=\omega(\xi_1,\xi_2)$, for $\xi_1\in \rr{d_1}$ and
$\xi_2\in \rr{d_2}$, which are positive and $L^1_{\loc}(\rr d)$.
The (weighted) mixed Fourier Lebesgue space $\mathscr
F\!L^{p,q}_{(\omega )}(\rr d)=\mathscr F\!L^{p,q}_{(\omega )}(\rr
{d_1}\times \rr {d_2})$ consists of all $f\in \mathscr S'(\rr d)$
such that
\begin{multline}\label{FLnorm1}
\nm f{\mathscr FL^{p,q}_{(\omega )}(\rr d)} \equiv \nm {\widehat
f\cdot \omega }{L^{p,q}}\\[1 ex]\equiv
\Big(\int_{\rr{d_2}}\Big(\int_{\rr{d_1}} |\widehat{f}
(\xi_1,\xi_2)|^p\omega(\xi_1,\xi_2)^p \, d\xi_1\Big)^{q/p}\,
d\xi_2\Big)^{1/q},
\end{multline}
is finite. If $\omega =1$, then the notation $\mathscr FL^{p,q}$
is used instead of $\mathscr FL^{p,q}_{(\omega )}$.

\par

Here and in what follows we use the notation $\mathscr F L^{p,q}_{(\omega )}=\mathscr F L^{p,q}_{s_1,s_2}$  when $\omega (\xi_1,\xi_2 )=\eabs {\xi_1} ^{s_1}\eabs {\xi_2} ^{s_2}$ so
\begin{multline}\label{Fourier-lebesgue1}
\|f\|^q_{\mathscr F L^{p,q}_{s_1,s_2} (\rr d)}\\[1 ex]\equiv \int_{\rr {d_2}}\eabs{\xi_2}^{s_2q}\Big(\int_{\rr {d_1}}\eabs{\xi_2}^{s_1p}
|\widehat{f} (\xi_1,\xi_2)|^p  \, d \xi_1\Big)^{q/p}  \, d \xi_2< \infty.
\end{multline}
\begin{thm} \label{a>1,1} Assume that the function
$\gamma:\R_+\rightarrow\R_+$ is strictly
increasing and continuous such that $\gamma(0)=0$. Let $R>0$, and let $\varphi$
be real-valued functions on $\rr d$ such that $\varphi '(r,\omega)$ and $\varphi''(r,\omega)$ are continuous and satisfy \eqref{funktionerna} and \eqref{deriverade funktioner} when $r>R$. Also let $p,q\in (1,\infty)$ with $1/p +1/q<1$. Then there
exists a function $f \in \mathscr F L^{p,q}_{s_1,s_2} (\rr d)$, where $s_1=d_1(p-1)/p$ and $s_2=d_2(q-1)/q$, such that $S^{\varphi} f$
is continuous in $\{(x,t); t>0\}$ and
\begin{equation}\label{Sa till infty1}
\limsup_{(y,t)\rightarrow (x,0)} |S^{\varphi} f(y,t)|= +\infty
\end{equation}
for all $x \in \rr d$, where the limit superior is taken over those $(y,t)$ for which $|y-x|<\gamma (t)$ and $t>0$.
Note that for $p=\infty$ or $q=\infty$ the result holds when $s_1=d_1$ or $s_2=d_2$, respectively.
\end{thm}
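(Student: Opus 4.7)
The plan is to use verbatim the counterexample $f_\varphi$ from \eqref{f^}, with $B$ chosen in an appropriate range, and to verify only the one point that is genuinely new here: $f_\varphi\in\mathscr F\!L^{p,q}_{s_1,s_2}(\rr d)$ for $s_1=d_1(p-1)/p$ and $s_2=d_2(q-1)/q$. Once that is established, the continuity of $S^{\varphi}f_\varphi$ on $\{t>0\}$ and the non-tangential blow-up \eqref{Sa till infty1} will be inherited \emph{verbatim} from Step 1 and Step 2 of the proof of Theorem \ref{a>1}: those steps use only the pointwise form of $\widehat{f_\varphi}$, the integration-by-parts bound \eqref{Ajxk}, and Lemma \ref{dense}, none of which is sensitive to the ambient function space.

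I will choose $B$ with $\max(1/p,1/q)<B<1$, which is possible since $p,q\in(1,\infty)$. Because the sets $\Omega_j$ are pairwise disjoint, $|\widehat{f_\varphi}(\xi)|^p = |\xi|^{-dp}(\log|\xi|)^{-Bp}\chi_{\cup_j\Omega_j}(\xi)$, supported in $\{|\xi|\geq R_1\}$. To estimate the mixed norm in \eqref{Fourier-lebesgue1}, I will first fix $\xi_2\in\rr{d_2}$, bound $\chi_{\cup_j\Omega_j}\leq 1$, pass to polar coordinates in $\xi_1$, and substitute $R=\sqrt{r^2+|\xi_2|^2}$. The algebraic identity $s_1 p+d_1-dp=-d_2 p$, which is simply $s_1 p=d_1(p-1)$ rewritten, collapses the inner integrand to $R^{-1-d_2 p}(\log R)^{-Bp}$, and integration over $R\geq A(\xi_2):=\max(R_1,|\xi_2|)$ should yield
\begin{equation*}
I(\xi_2):=\int_{\rr{d_1}}\eabs{\xi_1}^{s_1 p}|\widehat{f_\varphi}(\xi_1,\xi_2)|^p\, d\xi_1 \leq C\, A(\xi_2)^{-d_2 p}(\log A(\xi_2))^{-Bp}.
\end{equation*}

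Next, raising this to the power $q/p$, multiplying by $\eabs{\xi_2}^{s_2 q}$, and splitting the outer integral at $|\xi_2|=R_1$ gives a bounded contribution on the compact region $\{|\xi_2|\leq R_1\}$. On the tail $\{|\xi_2|>R_1\}$, the analogous cancellation $s_2 q-d_2 q=-d_2$ reduces the remaining integral, after polar coordinates in $\xi_2$, to $C\int_{R_1}^\infty \rho^{-1}(\log\rho)^{-Bq}\,d\rho$, which converges precisely because $Bq>1$. Hence $f_\varphi\in\mathscr F\!L^{p,q}_{s_1,s_2}$ and the verification is complete.

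The hard part will be the twofold exponent bookkeeping: both the inner $r$-integration and the outer $\rho$-integration must collapse to the critical form $\int \rho^{-1}(\log\rho)^{-\alpha}\,d\rho$, and the two critical smoothness thresholds $s_1,s_2$ must be met simultaneously. It is precisely this double criticality that forces $s_1=d_1(p-1)/p$ and $s_2=d_2(q-1)/q$. The endpoint cases $p=\infty$ (with $s_1=d_1$) and $q=\infty$ (with $s_2=d_2$) can be handled as in Lemma \ref{ghat}, by replacing the corresponding integral by an essential supremum, so that the relevant $(\log R_1)^{-B}$-factor is dealt with pointwise rather than through a convergent logarithmic tail.
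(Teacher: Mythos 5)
Your proposal is correct in outline, and for the only genuinely new ingredient --- the membership $f_\varphi\in\mathscr F\!L^{p,q}_{s_1,s_2}$ --- it takes a genuinely different route from the paper's Lemma \ref{ghat1}. The paper keeps the double integral intact on the region where both $|\xi_1|$ and $|\xi_2|$ are large, bounds $|\xi|^{-dp}\le|\xi_1|^{-d_1p}|\xi_2|^{-d_2p}$, and splits the logarithm \emph{multiplicatively}, $(\log|\xi|)^{-B}\le(\log|\xi_1|)^{-Bk}(\log|\xi_2|)^{-B(1-k)}$, so that the norm factors into two one-dimensional integrals each of the critical form $\int r^{-1}(\log r)^{-\alpha}\,dr$; this forces $Bkp>1$ \emph{and} $B(1-k)q>1$, hence $B>1/p+1/q$, which is exactly where the hypothesis $1/p+1/q<1$ enters. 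You instead integrate iteratively and observe that the inner $\xi_1$-integral converges for \emph{polynomial} reasons (the radial integrand decays like $R^{-1-d_2p}$), so no logarithm is consumed there and the full factor $(\log A(\xi_2))^{-Bp}$ survives to the outer integral, where only $Bq>1$ is needed. I checked your intermediate bound $I(\xi_2)\le C A(\xi_2)^{-d_2p}(\log A(\xi_2))^{-Bp}$ and it is correct (one should split the radial integral at $r=|\xi_2|$ rather than lean on the substitution $R=\sqrt{r^2+|\xi_2|^2}$ when $d_1p<2$, since then $r^{d_1-2+s_1p}\le R^{d_1-2+s_1p}$ fails near $r=0$; but this is routine and does not affect the conclusion). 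Your argument therefore not only proves the stated theorem but apparently dispenses with the hypothesis $1/p+1/q<1$ altogether, requiring only $\max(1/p,1/q)<B<1$ (indeed only $1/q<B<1$), which would also remove the restriction $p=q>2$ discussed in the paper's remark; the price is that the estimate is asymmetric in $p$ and $q$ and tied to the order of integration in \eqref{FLnorm1}, whereas the paper's factorization is symmetric and elementary. The reduction of continuity and of \eqref{Sa till infty1} to Steps 1--2 of Theorem \ref{a>1} is exactly what the paper does, and your treatment of the endpoints $p=\infty$, $q=\infty$ by essential suprema matches Lemma \ref{ghat}.
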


\begin{rem}

Note that $\mathscr F \! L^{p,p}=\mathscr F \! L^p$ holds
independently of $d_1$ and $d_2$. However the weights used in this
section for mixed Lebesgue spaces,
$\eabs{\xi_1}^{s_1}\eabs{\xi_2}^{s_2}$ is not equivalent to those
used for the $L^p$ spaces, even when $p=q$. In fact, by choosing
$p=q$ for these mixed spaces the weights are larger then the
weights used for the usual Lebesgue spaces, and thereby the
results in this section for the special case $p=q$ concern finding
counter examples in a smaller space. It is here important to note
that the result of previous section are not completely contained
in the results obtained here, since the condition $1/p+1/q<1$
implies the requirement that $p>2$ if $p=q$. No results for
$p=q<2$ are obtained in this section.

\end{rem}

When $s_1>d_1(p-1)/p$ and $s_2>d_2(q-1)/q$ no counter example of the form in Theorem \ref{a>1,1} can be provided, since $S^{\varphi}f(y, t)$ converges to $f(x)$ as $(y,t)$ approaches $(x,0)$ non-tangentially when $f \in \mathscr F L^{p,q}_{s_1,s_2} (\rr d)$. In fact, H{\"o}lder's inequality gives
\begin{gather*}
(2 \pi)^d|S^{\varphi}f(x,t)|\le \int_{\rr d}|\widehat{f}(\xi)| \, d\xi =\int_{\rr {d_2}}\int_{\rr {d_1}}|\widehat{f}(\xi_1,\xi_2)| \, d\xi_1\, d\xi_2\\[1 ex] \le \nm{\eabs {\cdot}^{-s_2}}{L^{q'}(\rr {d_2})}\nm{\int_{\rr {d_1}}|\widehat{f}(\xi_1,\cdot_2)| \, d\xi_1 \eabs{\cdot_2}^{s_2}}{L^q(\rr {d_2})}
\\[1 ex] \le \nm{\eabs {\cdot}^{-s_2}}{L^{q'}(\rr {d_2})}\nm{ \eabs{\cdot}^{-s_1}}{L^{p'}(\rr {d_1})}\nm{\nm{\widehat{f}(\cdot_1,\cdot_2) \eabs{\cdot_1}^{s_1}\eabs{\cdot_2}^{s_2}}{L^p(\rr {d_1})}}{L^q(\rr {d_2})}
\\[1 ex] = \nm{\eabs {\cdot}^{-s_2}}{L^{q'}(\rr {d_2})}\nm{ \eabs{\cdot}^{-s_1}}{L^{p'}(\rr {d_1})}\nm{\widehat{f}}{L^{p,q}_{s_1,s_2}(\rr d)}
\end{gather*}
which is finite when $f\in \mathscr F L_{s_1,s_2}^{p,q}(\rr d), \,
s_1>d_1(p-1)/p, \, s_2>d_2(q-1)/q$ for some $p,q\in (1,\infty)$. Here $p'$ and $q'$ denotes the conjugate exponents of $p$ and $q$ respectively, i.e. $1/p+1/p'=1$ and $1/q+ 1/q'=1$.
As noticed for $p=1$ in the previous section, different estimates
are used. For $p=q=1$ convergence follows directly from the first
inequality for $f\in \mathscr F\!L^{p}_{s_1,s_2}$, where $s_1\geq
0$ and $s_2\geq 0$. In case $p=1$ and $q\in (0,\infty)$ we have
that
\begin{multline*}
(2 \pi)^d|S^{\varphi}f(x,t)|\le \int_{\rr d}|\widehat{f}(\xi)| \,
d\xi =\int_{\rr {d_2}}\Big(\int_{\rr
{d_1}}|\widehat{f}(\xi_1,\xi_2)| \, d\xi_1 \Big)\, d\xi_2\\[1 ex]
\le \nm{\eabs {\cdot}^{-s_2}}{L^{q'}(\rr {d_2})}\nm{\int_{\rr
{d_1}}|\widehat{f}(\xi_1,\cdot_2)| \, d\xi_1
\eabs{\cdot_2}^{s_2}}{L^q(\rr {d_2})}
\\[1 ex] = \nm{\eabs {\cdot}^{-s_2}}{L^{q'}(\rr {d_2})}\nm{\widehat{f}}{L^{1,q}_{0,s_2}(\rr d)},
\end{multline*}
which is finite when $f\in \mathscr F\!L^{p,q}_{s_1,s_2}(\rr d)$,
$s_1\geq 0$ and $s_2>d_2(q-1)/q$. In a similar way it follows that
for $q=1$ and $p\in (0,\infty)$, for initial data $f\in \mathscr
F\!L^{p}_{s_1,s_2}$ no counter example concerning non-tangential
convergence exist when $s_1>d_1(p-1)/p$ and $s_2\geq 0$. Therefore
convergence along vertical lines can be extended to convergence
regions when $s_1>d_1(p-1)/p, \, s_2>d_2(q-1)/q$ and $f$ belongs
to $\mathscr F L_{s_1,s_2}^{p,q}(\rr d)$, or for $p=1$ and $q=1$,
$s_1\geq 0$ and $s_2\geq 0$, respectively.

\par

In the proof of
Theorem \ref{a>1,1} we consider the function $f_{\varphi}$, which
is defined by the formula
\begin{equation}\label{f^1}
\widehat{f}_{\varphi}(\xi )=\widehat{f}_{\varphi,B}(\xi )= | \xi | ^{-d} (\log | \xi | )^{-B}
\sum_{j=1}^{\infty} \chi _j(\xi)e^{- i( x_{j} \cdot \xi + t_{j}
{\varphi}(\xi))},
\end{equation}
where for $p,q\in (1,\infty)$,  $1/p +1/q<1$ fixed, we may fixate
$0<B<1$ and $0<k<1$ such that $1/p<B k<1$ and $1/q<B (1-k)<1$. We
also have that $\chi_j$ is the characteristic function of
\begin{equation*}
 \Omega_j =\{ \xi \in \rr d;
R_j<|\xi|<R'_j\}.
\end{equation*}

\par

We want to prove that $f_{\varphi}$ in \eqref{f^} belongs to
$\mathscr F L_{s_1,s_2}^{p,q}(\rr d)$ and fulfill \eqref{Sa till infty1}. The former
relation is a consequence of Lemma \ref{ghat1} below, which
concerns Sobolev space properties for functions of the form
\begin{equation}\label{definitiong1}
\widehat{g}(\xi )= | \xi | ^{-d} (\log | \xi | )^{-\rho/p}
\sum_{j=1}^{\infty} \chi _j(\xi)b_j(\xi),
\end{equation}
where $\chi_j$ is the characteristic function on disjoint sets $\Omega_j$.

\begin{lem} \label{ghat1} Assume that $0<\rho <1$ such that there exists $0<k<1$ for which $1/p<\rho k <1$ and $1/q < \rho (1-k)<1$,  $\Omega_j$ for $j \in \N$ are disjoint open subsets of
$\rr d\backslash B_{\mu}(0)$, where $\mu>2$, $b_j\in L^1_{loc}(\rr d)$ for $j \in
\N$ satisfies
\begin{equation*}
\sup_{j\in \N}\|b_j\|_{L^{\infty}(\Omega_j)}<\infty ,
\end{equation*}
and let $\chi_j$ be the characteristic function for $\Omega_j$. If $g$ is given by \eqref{definitiong1}, then
$g\in \mathscr F L_{s_1,s_2}^{p,q}(\rr d)$, where $s_1=d_1(p-1)/p$ and $s_2=d_2(q-1)/q$.

For $p=\infty$ or $q=\infty$ the result holds for $s_1=d_1$ or $s_2=d_2$, respectively. 
\end{lem}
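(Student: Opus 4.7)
The plan is to verify that the mixed Fourier--Lebesgue norm of $g$ given by \eqref{Fourier-lebesgue1} is finite, following the template of Lemma~\ref{ghat} but distributing the logarithmic factor of $\widehat g$ between the $\xi_1$- and $\xi_2$-directions according to the parameter $k$ furnished by the hypothesis. Since the $\Omega_j$ are pairwise disjoint and $\sup_j\|b_j\|_{L^\infty(\Omega_j)}<\infty$, I have the pointwise estimate $|\widehat g(\xi)|^{p}\leq C|\xi|^{-dp}(\log|\xi|)^{-\rho}$ on the support $\bigcup_j\Omega_j\subset\rr d\setminus B_\mu(0)$. Because $|\xi|\geq|\xi_1|$ and $|\xi|\geq|\xi_2|$, on the region where $|\xi_1|,|\xi_2|\geq 1$ the log factor factorises as
\[
(\log|\xi|)^{-\rho}\leq (\log|\xi_1|)^{-\rho k}(\log|\xi_2|)^{-\rho(1-k)},
\]
and on the boundary pieces where one of $|\xi_1|,|\xi_2|$ is bounded by $1$ the support condition $|\xi|>\mu$ forces the other component to be large, so that contribution is controlled by a one-variable argument of the type already carried out in Lemma~\ref{ghat}.

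For the inner $L^p$-integral against $\langle\xi_1\rangle^{s_1 p}=\langle\xi_1\rangle^{d_1(p-1)}$, I would introduce polar coordinates in $\xi_1$ and split the radial range into $|\xi_1|<|\xi_2|$ (where $|\xi|\simeq|\xi_2|$) and $|\xi_1|\geq|\xi_2|$ (where $|\xi|\simeq|\xi_1|$). In each subregion the substitution $u=\log r_1$ converts the radial integrand containing $(\log|\xi_1|)^{-\rho k}$ into an exponential-times-power expression of incomplete Gamma type, and a Laplace-type asymptotic (using $\rho k>1/p$) gives a bound of the shape $|\xi_2|^{-d_2 p}(\log|\xi_2|)^{-\rho k}$. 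Combined with the $(\log|\xi_2|)^{-\rho(1-k)}$ factor pulled out of the split, this should yield an estimate
\[
\int_{\rr{d_1}}\langle\xi_1\rangle^{s_1 p}|\widehat g(\xi_1,\xi_2)|^{p}\,d\xi_1\leq C|\xi_2|^{-d_2 p}(\log|\xi_2|)^{-\rho(1-k)p}
\]
for $|\xi_2|\gg 1$, with a trivially bounded contribution for $|\xi_2|$ bounded.

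Raising to the power $q/p$ and integrating against $\langle\xi_2\rangle^{s_2 q}=\langle\xi_2\rangle^{d_2(q-1)}$ in polar coordinates in $\xi_2$ should then reduce matters to a radial tail integral of the form
\[
\int_1^\infty r_2^{-1}(\log r_2)^{-\rho(1-k)q}\,dr_2,
\]
which is finite exactly because $\rho(1-k)>1/q$. For the extremal cases $p=\infty$ or $q=\infty$ the same scheme goes through with the corresponding $L^p$- or $L^q$-integral replaced by an essential supremum; the choice $s_1=d_1$ or $s_2=d_2$ cancels the $|\xi_1|^{-d_1}$ or $|\xi_2|^{-d_2}$ factor of $\widehat g$ and reduces to the $p=\infty$ case of Lemma~\ref{ghat}.

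The principal obstacle is the careful matching of logarithmic exponents across the two integrations in the transition region $|\xi_1|\simeq|\xi_2|$, where the naive factorisation $|\xi|^{-d}\leq|\xi_1|^{-d_1}|\xi_2|^{-d_2}$ is most wasteful. A dyadic decomposition keyed to which of $|\xi_1|,|\xi_2|$ dominates should ensure that each radial integral harvests precisely the amount of logarithmic decay dictated by $\rho k>1/p$ and $\rho(1-k)>1/q$, so that the two hypotheses on $k$ are each used optimally and the global norm comes out bounded.
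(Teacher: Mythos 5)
Your overall strategy --- splitting the logarithm between the $\xi_1$- and $\xi_2$-directions via the parameter $k$, factorising $|\xi|^{-d}\le|\xi_1|^{-d_1}|\xi_2|^{-d_2}$, and treating separately the regions where one of $|\xi_1|,|\xi_2|$ stays bounded --- is exactly the paper's, and the paper in fact gets by without your Laplace/incomplete-Gamma machinery by using the crude full product factorisation on the region $|\xi_1|,|\xi_2|>\mu$. The genuine gap is in the bookkeeping of the logarithmic exponents, and as written it is fatal. Starting from $|\widehat g(\xi)|^p\le C|\xi|^{-dp}(\log|\xi|)^{-\rho}$ (the literal reading of the $-\rho/p$ in \eqref{definitiong1}) and splitting $(\log|\xi|)^{-\rho}\le(\log|\xi_1|)^{-\rho k}(\log|\xi_2|)^{-\rho(1-k)}$, the $\xi_1$-radial integral you must control is
\begin{equation*}
\int_\mu^{|\xi_2|} r^{-1}(\log r)^{-\rho k}\,dr\sim(\log|\xi_2|)^{1-\rho k},
\end{equation*}
which \emph{grows}, because the hypothesis gives $\rho k<1$, not $\rho k>1$; the condition $\rho k>1/p$ is of no help at this exponent. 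Consequently your claimed inner bound $C|\xi_2|^{-d_2p}(\log|\xi_2|)^{-\rho(1-k)p}$ is unobtainable, and it is also inconsistent with your own intermediate steps: $(\log|\xi_2|)^{-\rho k}\cdot(\log|\xi_2|)^{-\rho(1-k)}=(\log|\xi_2|)^{-\rho}$, not $(\log|\xi_2|)^{-\rho(1-k)p}$, and $(\log|\xi_2|)^{-\rho}$ raised to the power $q/p$ leads to $\int r^{-1}(\log r)^{-\rho q/p}\,dr$, which requires $\rho q>p$ and fails for, say, $p=10$, $q=6/5$ (admissible since $1/p+1/q<1$).

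The resolution is that the exponent $-\rho/p$ in \eqref{definitiong1} is a slip carried over from \eqref{definitiong}: the function to which the lemma is applied, $\widehat f_\varphi$ in \eqref{f^1}, carries $(\log|\xi|)^{-B}$ with $1/p<Bk<1$ and $1/q<B(1-k)<1$, so the lemma is meant for $\widehat g$ carrying the factor $(\log|\xi|)^{-\rho}$. Then $|\widehat g|^p$ carries $(\log|\xi|)^{-\rho p}\le(\log|\xi_1|)^{-\rho kp}(\log|\xi_2|)^{-\rho(1-k)p}$ on the region where both variables exceed $\mu$; the $\xi_1$-integral becomes $\int r^{-1}(\log r)^{-\rho kp}\,dr<\infty$ precisely because $\rho kp>1$, and after raising to the power $q/p$ the $\xi_2$-integral becomes $\int r^{-1}(\log r)^{-\rho(1-k)q}\,dr<\infty$ precisely because $\rho(1-k)q>1$. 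This is how the two hypotheses on $k$ are actually consumed, and it is the computation in the paper. With that correction your decomposition closes (and the dyadic and Laplace-asymptotic refinements become unnecessary); without it, the proof does not go through.
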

\begin{proof} Here we give the proof for $1<p,q<\infty$. The modifications for $p=\infty$ and $q=\infty$ are left for the reader. By estimating \eqref{Fourier-lebesgue1} for the function $g$ we get that
\begin{multline*}
\int_{\rr {d_2}}\left(\int_{\rr {d_1}}|\widehat{g}(\xi_1,\xi_2 )|^p\eabs{\xi_1}^{s_1p}\,  d\xi_1\right)^{q/p}\eabs{\xi_2}^{s_2q}\,  d\xi_2 \\[1 ex] \le \int_{\rr {d_2}}\left(\int_{\rr {d_1}}| \xi | ^{-dp} (\log | \xi | )^{-\rho p}\eabs{\xi_1}^{s_1p}\,  d\xi_1\right)^{q/p}\eabs{\xi_2}^{s_2q} d\xi_2,\end{multline*}
when $|\xi|>\mu$. Here it is sufficient to prove that the latter expression is finite.
Since $|\xi|$ is bounded from below it follows that $|\xi_1|$ and $|\xi_2|$ can not be arbitrarily small at the same time. The integral is divided into the three following parts
\begin{multline*}
 \int_{|\xi_2|>\mu}\left(\int_{|\xi_1|<\mu}| \xi | ^{-dp} (\log | \xi | )^{-\rho p}\eabs{\xi_1}^{s_1p}\,  d\xi_1\right)^{q/p}\eabs{\xi_2}^{s_2q} d\xi_2\\[1 ex]\leq
  \int_{|\xi_2|>\mu}| \xi_2 | ^{-dq} (\log | \xi_2 | )^{-\rho q}\left(\int_{|\xi_1|<\mu}\eabs{\xi_1}^{s_1p}\,  d\xi_1\right)^{q/p}\eabs{\xi_2}^{s_2q} d\xi_2,\end{multline*}
which is finite since $q(d_1-d)< 0$,
\begin{multline*}
 \int_{|\xi_2|<\mu}\left(\int_{|\xi_1|>\mu}| \xi | ^{-dp} (\log | \xi | )^{-\rho p}\eabs{\xi_1}^{s_1p}\,  d\xi_1\right)^{q/p}\eabs{\xi_2}^{s_2q} d\xi_2\\[1 ex]\leq
  \int_{|\xi_2|<\mu}\left(\int_{|\xi_1|>\mu}| \xi_1 | ^{-dp} (\log | \xi_1 | )^{-\rho p}\eabs{\xi_1}^{s_1p}\,  d\xi_1\right)^{q/p}\eabs{\xi_2}^{s_2q} d\xi_2,\end{multline*}
which is finite since $p(d-d_2)< 0$
and
\begin{multline*}  C \left(\int_{|\xi_1|>\mu}| \xi_1 | ^{-d_1p} (\log | \xi_1 | )^{-\rho k p}
\eabs{\xi_1}^{s_1p}\,  d\xi_1  \right)^{q/p}\\[1
ex]\cdot\left(\int_{|\xi_2|>\mu}| \xi_1 | ^{-d_2q} (\log | \xi_2 |
)^{-\rho (1-k)q} \eabs{\xi_2}^{s_2q}\,  d\xi_2 \right)\\[1 ex]
\le 2^{(s_1p+s_2 q)/2} C \left(\int_{\rho}^{\infty}\frac{1}{r(\log
r  )^{\rho k p}}\right)\left(\int_{\rho}^{\infty}\frac{1}{r(\log r
)^{\rho (1-k)q}}\right) \,  dr < \infty .
\end{multline*}
The second inequality holds since $(1+r^2)^{sp/2}<(r^2+r^2)^{sp/2}= 2^{sp/2}
r^{sp}$ for $r>1$. The last inequality follows by choosing $0< k< 1$ such that $\rho k p>1$ and $\rho(1-k)q >1$.
\end{proof}

\end{document}